\title{On The Euclidean Algorithm: Rhythm Without Recursion}
	\author{Thomas Morrill\\
	        Trine University\\
	        Angola, Indiana, USA\\
	        \href{mailto:morrillt@trine.edu}{\nolinkurl{morrillt@trine.edu}}
	        }
\newtheorem{cor}{Corollary}
\newtheorem{thm}{Theorem}
\newtheorem{lemma}{Lemma}
\newcommand{\ZZ}{\mathbb{Z}}
\newcommand{\note}{\Vier \ }
\newcommand{\rest}{\ViPa \ }
\begin{document}

\maketitle

\begin{abstract}
  A modified form of Euclid's algorithm has gained popularity among musical composers following Toussaint's 2005 survey of so-called Euclidean rhythms in world music.
  We offer a method to easily calculate Euclid's algorthim by hand as a modification of Bresenham's line-drawing algorithm.
  Notably, this modified algorithm is a non-recursive matrix construction, using only modular arithmetic and combinatorics.
  This construction does not outperform the traditional divide-with-remainder method;
  it is presented for combinatorial interest and ease of hand computation.
\end{abstract}

\section{Introduction}

In 2005, Godfried Toussaint described a method of producing musical rhythms via the Euclidean algorithm.
These are commonly known as \emph{Euclidean rhythms}.
Euclid's well-known algorithm calculates the greatest common divisor of two integers $k$ and $N$ using integer division and recursion.
Toussaint's application generates a rhythm consisting of $k$ notes arranged in a pattern of $N$ beats.
Euclid's algorithm has been greatly influential in number theory, and more broadly the study of integral domains.
So, it may come as a surprise that Euclid's work has a practical application to musical composition.

Arguing for the aesthetic value of Euclidean rhythms is outside the scope of this work.
However, it is worth noting that Toussaint was able to catalogue a large number of Euclidean rhythms where they already exist in music traditions across many disparate world cultures.
A few examples of extant rhythms that may be reproduced by the Euclidean algorithm include traditional rhythms from Cuba, Persia, South Africa, and Turkey \cite{Toussaint-1}.

In the fifteen years following Toussaint's publication, the concept of Euclidean rhythms has become popular among music theorists and composers.
It is particularly popular among electronic and experimental composers;
many companies now produce Euclidean rhythm generators as commercial hardware or software, or incorporate these capabilities into other multi-function products.

For our purposes, a \emph{binary rhythm}, or more succinctly a \emph{rhythm}, is a finite sequence whose \emph{symbols} come from a set of two elements.
The symbols are arbitrary, but assumed to be opposites;
they may be taken from sets such as $\{0,1\}$, $\{<, >\}$, or $\{\textsc{Off}, \textsc{On}\}$.
We choose the symbol set $\{\ViPa, \Vier \}$, in keeping with the context of Toussaint's work.
Each rhythm has a length, and each symbol occuring in the rhythm is either a \emph{note} (\Vier) or a \emph{rest} (\ViPa).
Rhythms are treated cyclically;
for example \note \note \note \rest \rest is equivalent to \note \rest  \rest \note \note \cite{Toussaint-3}.

Toussaint utilizes Eric Bjorklund's algorithm as one method for generating Euclidean rhythms.
This algorithm originates in the theory of discrete-time control systems \cite{Bjorklund-1}.
Bjorklund's algorithm produces a rhythm of length $N$ which contains $k$ \textsc{On}s spaced as evenly as possible among $(N-k)$ \textsc{Off}s \cite{Bjorklund-2}.
This is achieved by recursively concatenating Euclidean rhythms of shorter length according to back-substitution.
At a conceptual level, the procedure resembles Euclid's recursive divide-with-remainder approach, using subsequences and symbol replacement in place of integer division and back-substitution  \cite{Toussaint-3}.
These two algorithms are in fact equivalent.

\begin{figure}
\[
\begin{bmatrix}
-1 & 0 & 1 & 2 & 3 &  4 &  5 &  6 \\
-3 & 0 & 3 & 6 & 9 & 12 & 15 & 18 \\
 4 & 0 & 3 & 6 & 2 &  5 &  1 &  4
\end{bmatrix}
\]
\caption{The Euclidean array $E(3, 7)$. Three descents occur along the bottom row.}
\label{three-seven-array}
\end{figure}

Other applications of Euclid's algorithm outside number theory include calculation of leap years, and in particular, Bresenham's line drawing algorithm \cite{Bresenham}.
The latter was developed for displaying line segments on digital displays.
We propose a variant of Bresenham's algorithm which constructs what we call the \emph{Euclidean array} $E(k,N)$.
This array contains $\gcd(k,N)$ as a minimal element.
In practical terms, $E(k,N)$ records the intermediate results of Bresenham's subtract-and-carry loop.
It is intended for ease of hand computation, using a written array in place of dynamic computer memory, and combinatorics in place of carries.

Of course, Euclid's algorithm has another important application beyond finding $\gcd(k, N)$.
The extended Euclidean algorithm also calculates solutions to the Diophantine equation
\begin{align} \label{Diophantine}
  ak + bN = \gcd(k, N)
\end{align}
via back-substitution.
Our non-recursive algorithm solves \eqref{Diophantine} in three steps:
\begin{enumerate}
  \item Construct $E(k, N)$.
  \item Locate the least positive entry on the third row of $E(k, N)$.
  \item Calculate $a$ and $b$ using the explicit formulas in Theorem \ref{extended}.
\end{enumerate}

Two methods to construct the arrays $E(k, N)$ appear in Section \ref{euclidean-arrays}.
We sketch the proof of our algorithm's correctness in Section \ref{elementary}.
Section \ref{descent} presents a method for determining Euclidean rhythms from the $E(k,N)$.
This method involves identifying the descents of the array, treated as a permutation.
Finally, Section \ref{end} ends on some closing remarks.

\section{Euclidean Arrays}
\label{euclidean-arrays}

Given $0 \leq k \leq N$, the \emph{Euclidean array} $E(k,N)$ is a $3 \times (N+1)$ array constructed as follows.
Row 1 is the arithmetic progression $-1, 0, 1, \ldots, (N-1)$ of length $N+1$.
It is easily written down by hand.
The entries of row 2 are each equal to $k$ times the corresponding entry from row 1.
This row may be constructed either via multiplication of the entries on row 1 by $k$, or by repeated addition of $k$ to $-k$.
The entries of row 3 are each equal to the corresponding entry from row 2, reduced modulo $N$ to its representative in the set $\{0, 1, \ldots, (N-1)\}$.
When calculating by hand, one can take advantage of the fact that exactly one of the numbers $(m+k)$ and $(m - (N-k))$ appear in the residue set $\{0, 1, \ldots, (N-1)\}$ if $m$ is already such a residue.

We refer to the rows of $E(k, N)$ as the \emph{index row}, \emph{integer row}, and \emph{residue row}, respectively.
An example is given in Figure \ref{three-seven-array}.

Some explanation regarding this construction is necessary.
We choose to start indexing at $-1$ so that the corresponding Euclidean rhythm begins with a note.
Also, we deliberately avoid treating the indices cyclically.
While doing so would allow the omission of one column, we consider it to be an oversimplification.
Instead, we intend Euclidean arrays to be marked with the symbols $>$ and $<$ as in Figure \ref{marked} for identifying descents during hand computation.
This is further discussed in Section \ref{descent}.

\begin{figure}
\[
\begin{bmatrix}
-1 & & 0 & & 1 & & 2 & & 3 \\
-2 & & 0 & & 2 & & 4 & & 6 \\
 2 &>& 0 &<& 2 &>& 0 &<& 2
\end{bmatrix}
\]
\caption{The Euclidean array $E(2, 4)$. Its ascents and descents are marked along the bottom row.}
\label{marked}
\end{figure}

\section{Elementary Results} \label{elementary}

Consider a Euclidean array $E(k,N)$ with $0 \leq k \leq N$.
The following results are easily proven using elementary number theory\footnote{Lemma \ref{exercise} is vaculously true when $k = 0, N$.}.

\begin{lemma} \label{exercise}
	The least positive entry on the residue row is equal to $\gcd(k, N)$.
	The greatest positive entry on the residue row is equal to $N - \gcd(k, N)$.
\end{lemma}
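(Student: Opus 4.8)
The plan is to read off what the residue row actually contains and recognize it as a familiar cyclic subgroup of $\ZZ/N\ZZ$. Writing $m$ for an index-row entry, the three rows place $m$, then $mk$, then $mk \bmod N$ in each column, so the residue row is precisely the list of values $mk \bmod N$ as $m$ ranges over $\{-1, 0, 1, \ldots, N-1\}$. I would first dispose of the degenerate cases $k = 0$ and $k = N$, where every integer-row entry is a multiple of $N$, the residue row is identically $0$, and both statements hold vacuously; so assume $0 < k < N$.

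Next I would argue that the set of values appearing on the residue row equals the set of all multiples of $g := \gcd(k,N)$ lying in $\{0, 1, \ldots, N-1\}$, namely $\{0, g, 2g, \ldots, N - g\}$. The index set $\{-1, 0, 1, \ldots, N-1\}$ contains the complete residue system $\{0, 1, \ldots, N-1\}$ modulo $N$, so the residue-row values are exactly $\{mk \bmod N : m \in \ZZ\}$, i.e. the cyclic subgroup of $\ZZ/N\ZZ$ generated by $k$. The forward inclusion is immediate: since $g \mid k$, every $mk$ is a multiple of $g$, hence so is its reduction modulo $N$, because $g \mid N$ as well.

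The crux is the reverse inclusion, and this is where I expect the only genuine work. By B\'ezout's identity there exist integers $a, b$ with $ak + bN = g$, so $ak \equiv g \pmod N$; then for each integer $t$ we have $tg \equiv (ta)k \pmod N$, which exhibits every multiple of $g$ as some $mk \bmod N$. Together with the forward inclusion, this shows the residue row takes exactly the values $\{0, g, 2g, \ldots, N-g\}$.

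Finally I would read off the two conclusions from this value set. The least positive member is $g = \gcd(k,N)$, giving the first claim. The greatest is $N - g = N - \gcd(k,N)$, since $N$ itself reduces to $0$ and, as $g \mid N$, the largest multiple of $g$ strictly below $N$ is $N - g$. This completes the argument.
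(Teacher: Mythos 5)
Your proof is correct: identifying the residue row with the cyclic subgroup $\langle k\rangle = \langle \gcd(k,N)\rangle$ of $\ZZ/N\ZZ$ via B\'ezout's identity, and handling $k=0,N$ vacuously, is exactly the ``elementary number theory'' argument the paper gestures at (it states this lemma without proof, noting only the vacuous cases in a footnote). Nothing is missing.
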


\begin{thm} \label{extended}
	Given $E(k, N)$, if $[a_1, a_2, a_3]^T$ is a column vector such that $a_3 = \gcd(k,N)$, then $a_{1} k + b N = \gcd(k, N)$, where
	\[
	  b = \frac{a_2 - a_3}{N}.
	\]
\end{thm}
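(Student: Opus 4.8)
The plan is to prove the identity directly from the two defining relations of the array, since both the integer row and the residue row are given by explicit formulas in terms of the index entry $a_1$. Write the chosen column as $[a_1, a_2, a_3]^T$, so that $a_1$ is an entry of the index row, $a_2$ the corresponding integer-row entry, and $a_3$ the residue-row entry. No recursion or back-substitution is needed; everything should collapse to a single rearrangement.

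First I would record the defining relation of the integer row: by construction each integer-row entry is $k$ times the index above it, so $a_2 = k a_1$. Next I would record the defining relation of the residue row: $a_3$ is the representative of $a_2$ modulo $N$ lying in $\{0, 1, \ldots, N-1\}$, so $a_3 \equiv a_2 \pmod{N}$ and, crucially, $a_2 - a_3$ is an integer multiple of $N$. This last point is what guarantees that $b = (a_2 - a_3)/N$ is a genuine integer rather than a fraction, so that the claimed Diophantine coefficient is well defined.

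With these two relations in hand, the conclusion should follow by substitution: replacing $a_2$ by $k a_1$ in the equation $bN = a_2 - a_3$ yields $k a_1 - bN = a_3$, and since the hypothesis states $a_3 = \gcd(k, N)$, this is exactly the desired relation tying $a_1$ and $b$ to $\gcd(k,N)$. That the value $a_3 = \gcd(k,N)$ is actually attained by some column is guaranteed by Lemma \ref{exercise}, which identifies the least positive residue-row entry as $\gcd(k,N)$, so the hypothesis is never vacuous for $0 < k < N$.

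The computation itself is elementary, so I do not anticipate a genuine obstacle; the one point demanding care is the bookkeeping of signs. A direct substitution produces the term $-bN$, and one should confirm the sign convention against a worked example before stating the final form. For instance, the column $[5, 15, 1]^T$ of $E(3,7)$ gives $b = (15 - 1)/7 = 2$ and $5 \cdot 3 - 2 \cdot 7 = 1 = \gcd(3,7)$, which pins down whether the $N$-coefficient in the Diophantine identity is $b$ or $-b$ (equivalently, whether $b$ is read as $(a_2 - a_3)/N$ or $(a_3 - a_2)/N$). Verifying this sign against the example is the step I would treat most carefully.
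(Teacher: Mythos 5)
Your proposal is correct, and it is exactly the one-line elementary argument the paper gestures at but never writes out (the text only says these results are ``easily proven using elementary number theory''): $a_2 = k a_1$ by construction of the integer row, $a_2 - a_3$ is a multiple of $N$ by construction of the residue row, so $b$ is an integer and $k a_1 - bN = a_3 = \gcd(k,N)$. Your insistence on checking the sign against the example is not excess caution --- it catches a genuine defect in the statement as printed. With $b = (a_2 - a_3)/N$ the derivation yields $a_1 k - bN = \gcd(k,N)$, and the paper's own worked example confirms this: for the column $[5, 15, 1]^T$ of $E(3,7)$ one gets $b = 2$ and $5 \cdot 3 - 2 \cdot 7 = 1$, whereas the displayed identity $a_1 k + bN = \gcd(k,N)$ would give $29$. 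So either the identity should carry a minus sign or $b$ should be defined as $(a_3 - a_2)/N$; the same sign issue propagates into Corollary \ref{complement}. The only other point worth making explicit, which you do note, is that Lemma \ref{exercise} guarantees such a column exists for $0 < k < N$, so the theorem is not vacuous.
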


This theorem occasionally gives solutions to the Diophantine equation \eqref{Diophantine} that are true, but inelegant.
For example, the array in Figure \ref{three-seven-array} produces the equation
\[
	5 \cdot 3 - 2 \cdot 7 = 1,
\]
rather than the more obvious
\[
	1 \cdot 7 - 2 \cdot 3 = 1.
\]
An alternative solution can sometimes overcome this blemish.

\begin{cor} \label{complement}
	Given $E(k, N)$, if $[b_1, b_2, b_3]^T$ is a column vector such that $b_3 = N - \gcd(k,N)$, then $aN - b_{1} k = \gcd(k, N)$, where
	\[
	  a = 1 - \frac{b_2 - b_3}{N}.
	\]
\end{cor}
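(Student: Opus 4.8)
The plan is to prove the corollary by the same elementary substitution that underlies Theorem~\ref{extended}, exploiting the fact that the two statements concern \emph{complementary} residues, $\gcd(k,N)$ and $N-\gcd(k,N)$. First I would read off the two defining relations of the chosen column directly from the construction of $E(k,N)$. Writing $b_1$ for the index entry, the integer row is $k$ times the index row, so $b_2 = k b_1$ \emph{exactly}; and the residue row reduces $b_2$ modulo $N$ into $\{0,1,\dots,N-1\}$, so $b_2 - b_3$ is an integer multiple of $N$. Setting $q = (b_2 - b_3)/N \in \ZZ$, these two facts combine into the single identity $k b_1 = b_2 = b_3 + qN$.

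The crux is then a one-line computation. By Lemma~\ref{exercise} a column with $b_3 = N - \gcd(k,N)$ does occur (it realizes the greatest positive residue), so the hypothesis is nonvacuous for $0 < k < N$; the boundary cases $k = 0, N$ are handled as in the footnote to that lemma. Taking complements, $N - b_3 = \gcd(k,N)$, and substituting $b_3 = k b_1 - qN$ gives
\[
  \gcd(k,N) = N - b_3 = N - (k b_1 - qN) = (1+q)N - b_1 k.
\]
Reading off the coefficient of $N$ now identifies $a$ in terms of $q = (b_2-b_3)/N$ and yields the asserted equation $aN - b_1 k = \gcd(k,N)$.

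I expect the only genuine subtlety to be sign bookkeeping: one must confirm that $q = (b_2 - b_3)/N$ is precisely the quotient produced by the residue reduction, and track how the complement step $b_3 \mapsto N - b_3$ flips the sign relative to the corresponding step in Theorem~\ref{extended}. Conceptually this flip is transparent, since $b_3 \equiv k b_1 \pmod N$ together with $b_3 = N - \gcd(k,N)$ gives $k b_1 \equiv -\gcd(k,N) \pmod N$; that is, the chosen column is the additive complement of a $\gcd$-column, which is exactly why the two formulas differ only in this bookkeeping. Everything else is the same arithmetic already used for the theorem, so no further machinery is required.
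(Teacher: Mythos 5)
Your substitution is exactly the elementary argument the paper has in mind: the paper gives no proof at all for Theorem~\ref{extended} or Corollary~\ref{complement}, dismissing both as ``easily proven using elementary number theory,'' and the two facts you isolate ($b_2 = k b_1$ exactly, and $N \mid b_2 - b_3$) are all that is needed. So there is no difference of approach to report.

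The gap is that you never finish the ``sign bookkeeping'' that you yourself identify as the crux. Your computation gives $\gcd(k,N) = (1+q)N - b_1 k$ with $q = (b_2 - b_3)/N$, i.e.\ $a = 1 + \frac{b_2 - b_3}{N}$, whereas the corollary asserts $a = 1 - \frac{b_2 - b_3}{N}$; these agree only when $b_2 = b_3$. You cannot say your identity ``yields the asserted equation'' without reconciling this, and in fact it cannot be reconciled in favor of the printed formula: your derivation is forced by the construction of $E(k,N)$, so it is the statement that must give way. A concrete check: in the full array $E(4,6)$ the column $[4, 16, 4]^T$ has $b_3 = 4 = N - \gcd(4,6)$ and $q = 2$; the correct coefficient is $a = 3$ (indeed $3 \cdot 6 - 4 \cdot 4 = 2$), while the stated formula gives $a = -1$ and $-1 \cdot 6 - 4 \cdot 4 = -22$. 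The paper's displayed examples all happen to have $q = 0$, which is why the discrepancy is invisible there; the same sign slip occurs in Theorem~\ref{extended}, whose own worked example $5 \cdot 3 - 2 \cdot 7 = 1$ requires $b = -(a_2 - a_3)/N$ rather than $+(a_2 - a_3)/N$. So either prove the corrected statement $a = 1 + (b_2 - b_3)/N$ and flag the typo, or do not claim to have proved the statement as written. The remaining material in your write-up --- non-vacuousness via Lemma~\ref{exercise}, and the observation that $k b_1 \equiv -\gcd(k,N) \pmod{N}$ --- is correct but inessential to the conditional claim.
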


We pause at the threshold to Section \ref{descent} to optimize our construction.
To construct a \emph{reduced} Euclidean array $\hat{E}(k,N)$, proceed by completing columns, not rows.
Terminate the array after the first repeated entry on the residue row, regardless of the number of columns.
An example is given in Figure \ref{four-six-array}.

There is a further optimization if one is only concenered with $gcd(k, N)$ and not the corresponding Euclidean rhythm.
If either of the residues $1$ or $(N-1)$ occur, as in Figure \ref{three-seven-array}, then the construction may be immediately interrupted in favor of applying Lemma \ref{exercise}, Theorem \ref{extended}, or Corollary \ref{complement}, as in both cases $gcd(k,N) = 1$.

\begin{figure}
\[
\begin{bmatrix}
-1 & 0 & 1 & 2 \\
-4 & 0 & 4 & 8 \\
 2 & 0 & 4 & 2
\end{bmatrix}
\]
\caption{The reduced Euclidean array $\hat{E}(4, 6)$.}
\label{four-six-array}
\end{figure}

\section{Determining Euclidean Rhythms}
\label{descent}

Consider the residue row $[n_{-1}, n_0, \ldots, n_{N-1}]$ of a Euclidean array $E(k,N)$.
Each pair of adjacent entries $(n_{i}, n_{i+1})$ is called an \emph{ascent} if $n_{i} < n_{i+1}$, and a \emph{descent} if $n_{i} > n_{i+1}$.
Ascents and descents appear in the study of permutations, notably in the definition of the Eulerian numbers \cite{Knuth}.

We construct the \emph{Euclidean rhythm} corresponding to $E(k, N)$ as follows.
The $i$th entry of the Euclidean rhythm is \note if the pair $(n_{i-1}, n_{i})$ is a descent, and \rest otherwise.
The Euclidean rhythm corresponding to Figure \ref{three-seven-array} is \note \rest \rest \note \rest \note \rest.
The residue row of $E(N,N)$ is identical to that of $E(0,N)$, so we define its corresponding Euclidean rhythm to be \note \note \ldots \note.

This procedure may also be performed on the reduced array $\hat{E}(k, N)$, with the observation that Euclidean rhythms are periodic with minimal period length $N/\gcd(k,N)$.
The Euclidean rhythm corresponding to Figure \ref{four-six-array} is \note \rest \note \note \rest \note.

We now prove the following lemma.

\begin{lemma}
	The Euclidean rhythm obtained from $E(k, N)$ contains exactly $k$ notes.
\end{lemma}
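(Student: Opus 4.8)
The plan is to turn the descent count into a telescoping sum over a floor function. Writing the residue row as $n_i = ik \bmod N$ for $i = -1, 0, 1, \ldots, N-1$, the number of notes in the Euclidean rhythm is by definition the number of indices $i \in \{0, 1, \ldots, N-1\}$ for which $(n_{i-1}, n_i)$ is a descent, that is, $n_{i-1} > n_i$. I would first dispose of the boundary cases: when $k = 0$ the residue row is identically zero, so there are no descents and hence $0 = k$ notes, while the case $k = N$ is governed by the separate stipulation assigning the rhythm \note \note \ldots \note, which indeed has $N = k$ notes. This leaves the principal range $0 < k < N$.

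For the main argument, introduce the quotients $q_i = \lfloor ik / N \rfloor$, so that $n_i = ik - N q_i$. Subtracting consecutive entries gives
\[
  n_i - n_{i-1} = k - N(q_i - q_{i-1}).
\]
Because $k > 0$ the sequence $(q_i)$ is nondecreasing, so $q_i - q_{i-1} \geq 0$; and because $n_i, n_{i-1} \in \{0, 1, \ldots, N-1\}$ their difference lies strictly between $-N$ and $N$. Together with $k < N$, these constraints force $q_i - q_{i-1} \in \{0, 1\}$. Moreover the pair $(n_{i-1}, n_i)$ is a descent precisely when $n_i - n_{i-1} < 0$, which by the displayed identity occurs if and only if $q_i - q_{i-1} = 1$. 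Thus each descent contributes exactly one unit to the increment of the quotient sequence and each ascent contributes none.

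The number of notes is therefore the telescoping sum
\[
  \sum_{i=0}^{N-1} (q_i - q_{i-1}) = q_{N-1} - q_{-1}.
\]
It remains to evaluate the two boundary quotients. Since $0 < k < N$ we have $-1 < -k/N < 0$, so $q_{-1} = \lfloor -k/N \rfloor = -1$; and since $(N-1)k/N = k - k/N$ with $0 < k/N < 1$, we get $q_{N-1} = \lfloor k - k/N \rfloor = k - 1$. Hence the number of notes equals $(k-1) - (-1) = k$, as claimed.

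I do not anticipate a serious obstacle; the only point requiring care is the justification that consecutive quotient increments are confined to $\{0, 1\}$ and coincide exactly with descents, since this is precisely where the hypothesis $0 < k < N$ enters. A reader might instead prefer the equivalent geometric phrasing — each descent marks a crossing of a multiple of $N$ by the integer row as it climbs from $-k$ to $(N-1)k$ — in which case the count becomes the number of multiples of $N$ in the interval $(-k,\, (N-1)k]$, namely $0, N, \ldots, (k-1)N$, again $k$ of them.
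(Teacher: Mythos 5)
Your proof is correct and is essentially the arithmetic form of the paper's own argument: your quotient $q_i = \lfloor ik/N \rfloor$ counts exactly how many of the horizontal lines $y = Nt$ the paper's interpolating segment has crossed by $x = i$, so your telescoping identity $q_{N-1} - q_{-1} = (k-1) - (-1) = k$ is the same count of $k$ crossings that you yourself restate geometrically in your closing remark. The only substantive difference is that you rigorously justify the descent--crossing correspondence (via $q_i - q_{i-1} \in \{0,1\}$ for $0 < k < N$) and dispose of the boundary cases $k = 0$ and $k = N$, details the paper's two-sentence sketch leaves implicit.
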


\begin{proof}
Consider a line segment in the Cartesian plane with endpoints $(-1, -k)$ and $(N-1, Nk - k)$.
This segment interpolates the first two rows of the Euclidean array $E(k,n)$, with the index row giving the $x$-coordinates, and the integer row giving the $y$-coordinates.

A descent appears on the residue row of $E(k,N)$ between columns $i-1$ and $i$ if and only if the line segment intersects a horizontal line of the form $y = Nt$  with $t \in \ZZ$ on the $x$-interval $(i-1, i]$.
The line segment crosses $k$ such lines, whose formulas are $y = 0, N, 2N, \ldots, (k-1)N$.
\end{proof}

We close the section with two smaller results.
The first allows computation of $\gcd(k, N)$ from a Euclidean rhythm without reference to $E(k, N)$.

\begin{cor}
	Given a Euclidean rhythm $R$ of length $N$ which contains $k$ notes, $\gcd(k, N)$ is equal to the number of occurences of the minimal period of $R$.
\end{cor}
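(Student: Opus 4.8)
The plan is to deduce this corollary directly from the periodicity fact recorded just above, namely that a Euclidean rhythm has minimal period length $N/\gcd(k,N)$; once that length is known, counting occurrences is a single division. First I would set $d = \gcd(k,N)$ and recall that $R$ is the concatenation of identical copies of its minimal period, a block of length $N/d$. Because $R$ has total length $N$, and $d \mid N$ guarantees that $N/d$ divides $N$, these blocks tile $R$ exactly, so the number of occurrences of the minimal period is
\[
  \frac{N}{N/d} = d = \gcd(k,N),
\]
as claimed.

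For completeness I would establish the period length from the array itself rather than only citing Section \ref{descent}. The residue-row entry at index $i$ equals $ki \bmod N$, and the congruence $k\,(i + N/d) = ki + (k/d)\,N \equiv ki \pmod{N}$ shows that the residue row, hence the descent pattern defining $R$, repeats after $N/d$ columns; thus $N/d$ is a period. To see it is minimal, suppose some $p \mid N$ is also a period. Then $R$ consists of $N/p$ blocks each containing $kp/N$ notes, and since the total note count is the integer $k$ (by the preceding lemma), $kp/N$ must be a positive integer. Writing $k = dk'$ and $N = dN'$ with $\gcd(k',N') = 1$, integrality of $k'p/N'$ forces $N' \mid p$, so $p \geq N/d$; hence $N/d$ is the minimal period.

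The main obstacle I anticipate is not the arithmetic, which is immediate, but fixing the meaning of ``number of occurrences'' and confirming that $R$ really decomposes into whole, non-overlapping copies of its period. I would meet this by stressing that periodicity with a period $p$ dividing $N$ is exactly the statement that $R$ is a concatenation of $N/p$ identical length-$p$ blocks; taking $p = N/d$ yields $d$ such blocks and nothing further to count. With the minimal-period claim secured, the corollary follows at once, delivering the advertised ability to recover $\gcd(k,N)$ from a rhythm without ever constructing $E(k,N)$.
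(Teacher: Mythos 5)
Your proof is correct and takes the route the paper intends: the paper states this corollary without proof, relying on the earlier observation (itself unproven there) that the minimal period has length $N/\gcd(k,N)$, from which the number of occurrences is the quotient $N \div (N/\gcd(k,N)) = \gcd(k,N)$. Your added verification of the period-length claim via the residue row and the note-count lemma supplies details the paper omits; the only step you gloss over is why the minimal cyclic period must divide $N$ (standard, since for a cyclic word of length $N$ the set of periods is closed under taking $\gcd$ with $N$, so your restriction to $p \mid N$ is harmless).
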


For example, the Euclidean rhythm \note \rest \note \note \note \rest \note \note has two occurrences of its minimal period \note \rest \note \note.
This demonstrates that $\gcd(6, 8) = 2$.
Our final result shows that the set of all Euclidean rhythms is invariant under the involution $\Vier \leftrightarrow \rest$.

\begin{lemma}
	Constructing the rhythm corresponding to $E(k, N)$, but instead assigning notes to ascents, and rests to non-ascents, is equivalent to constructing the rhythm corresponding to $E(N-k, N)$ via the ordinary method.
\end{lemma}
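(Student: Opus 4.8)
The plan is to express the residue row of $E(N-k,N)$ in terms of that of $E(k,N)$ by reversing the column index, and then read off what this does to ascents and descents. Assume $0 < k < N$, so that adjacent residues are never equal and each pair is either an ascent or a descent; the cases $k = 0, N$ are covered by the conventions already fixed. Observe first that, under this hypothesis, assigning notes to ascents and rests to non-ascents is exactly the complement of the ordinary rhythm of $E(k,N)$ obtained by swapping notes and rests, since the non-ascents are precisely the descents. So the lemma asserts that this complement agrees, as a cyclic rhythm, with the ordinary rhythm of $E(N-k,N)$.

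Next I would record the one identity that drives everything. Writing $n_i = ik \bmod N$ and $m_i = i(N-k)\bmod N$ for the two residue rows, the congruence $i(N-k) \equiv -ik \pmod N$ gives $m_i = n_{-i}$ once the residue row is extended periodically in the index; this is consistent with the array as displayed, since $n_{-1} = n_{N-1}$. In other words, the residue row of $E(N-k,N)$ is the residue row of $E(k,N)$ read backwards.

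I would then convert this reversal into the language of descents. Because $(m_{i-1}, m_i) = (n_{1-i}, n_{-i})$, position $i$ is a descent of $E(N-k,N)$ exactly when $n_{1-i} > n_{-i}$, that is, exactly when the pair $(n_{-i}, n_{1-i})$ at position $1-i$ of $E(k,N)$ is an ascent. Hence position $i$ carries a \note in the ordinary rhythm of $E(N-k,N)$ if and only if position $1-i$ carries a \note in the ascent rhythm of $E(k,N)$. Combined with the reduction above, this shows cleanly that the ordinary rhythm of $E(N-k,N)$ is the reversal of the complement of the ordinary rhythm of $E(k,N)$.

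The remaining and genuinely delicate point is that $i \mapsto 1-i$ is a reflection of $\ZZ/N$, whereas the equivalence of rhythms fixed in the introduction is rotation. To bridge this I would prove that a Euclidean rhythm is palindromic as a necklace, so that its reversal is already one of its own rotations. A convenient tool is the characterization $i \in D(k) \iff (ik \bmod N) < k$, where $D(k)$ denotes the set of descent positions; reducing to the case $\gcd(k,N) = 1$ (the general rhythm being a repetition of this one), one checks that a suitable index-reflection $i \mapsto c - i$, with $c$ built from an inverse of $k$ modulo $N$, preserves $D(k)$, which is precisely the symmetry needed. I expect this to be the main obstacle, since it is the only step not forced by the reversal identity, and the half-integer offset between ``a descent at position $i$'' and the underlying crossing pattern is what must be tracked with care. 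Once the palindrome property is in hand, the reflection of the previous paragraph becomes a rotation and the two rhythms coincide under the paper's cyclic equivalence. (If one instead treats rhythms up to rotation and reflection together, as is also standard for rhythmic necklaces, the reflection step already finishes the proof.)
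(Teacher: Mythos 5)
The paper never actually proves this lemma --- it is stated bare, with a pointer to Bjorklund's work for the ``equal distribution properties'' --- so there is no in-paper argument to compare against; your proposal supplies a real proof where the paper supplies none, and its skeleton is sound. The reversal identity $m_i = n_{-i}$ coming from $i(N-k) \equiv -ik \pmod{N}$ is correct, the translation into ``the ordinary rhythm of $E(N-k,N)$ is the reflection $i \mapsto 1-i$ of the ascent rhythm of $E(k,N)$'' is correct (it checks out on $E(3,7)$ versus $E(4,7)$: \rest \note \note \rest \note \rest \note\ versus \note \rest \note \rest \note \rest \note, which are rotations but not literally equal, confirming that the reflection issue is genuine and not an artifact of your setup), and you correctly isolate the one remaining piece of content: that reflection can be traded for rotation, i.e., that Euclidean rhythms are palindromic as necklaces. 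Your plan for that step also works as advertised: with $i \in D(k) \iff (ik \bmod N) < k$, the reflection $i \mapsto c-i$ with $ck \equiv k-1 \pmod{N}$ preserves $D(k)$ because $r \mapsto (k-1-r) \bmod N$ maps $\{0, \ldots, k-1\}$ bijectively onto itself and $\{k, \ldots, N-1\}$ onto itself, and the reduction to $\gcd(k,N)=1$ via periodicity is exactly what makes $k$ invertible so that $c$ exists. Two caveats. First, that palindrome step is only sketched, and it is essentially all of the mathematical content of the lemma, so a final write-up must carry it out rather than gesture at it. Second, the case $k=0$ is not ``covered by the conventions already fixed'': a constant residue row has no ascents, so the ascent rhythm of $E(0,N)$ is all rests, while the paper explicitly defines the ordinary rhythm of $E(N,N)$ to be all notes; the lemma therefore needs the restriction $0 < k \leq N$ or a matching special convention for the ascent construction in the degenerate case.
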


In other words, Euclidean rhythms distribute their rests in the same manner as their notes.
We refer the curious reader to Bjorklund's work for the proof of Euclidean rhythms' equal distribution properties \cite{Bjorklund-2}.
Many other theoretical properties of the Euclidean rhythms have been described by Gomez-Martin, Taslakian, and Toussaint \cite{Toussaint-2}.

\section{Discussion} \label{end}
We have now considered applications of Euclid's algorithm from outside mathematics, and returned with a some new mementos.
This work demonstrates a connection between the Euclidean algorithm, musical rhythms, sequences of residues, and combinatorics.
We hope to entertain students of number theory and discrete mathematics at all levels.

One imagines a novice asking ``How many times do I repeat the division step in Euclid's algorithm?'' and not being satisfied with the response ``About $\log N$ times, you'll know when you're done.''
I find our alternate repsonse, ``Calculate $N$ residues, then look for the smallest'' to be interesting for its definiteness, despite being a worse answer in terms of computational complexity.

Construction of the Euclidean arrays avoids the common pitfalls of Euclid's algorithm -- keeping track of which integers are the quotients, divisors, and remainders, and arranging them correctly during back-substitution.
Perhaps the $E(k,N)$ will find a home in maths pedagogy.
Further, a musical composer who wishes to apply Euclidean rhythms may find it easier to work with the array $E(k, N)$ by hand compared to Bjorklund's algorithm under the same circumstance.

It remains an open and subjective question whether identifying descents in other integer sequences would produce musically interesting rhythms.



\begin{thebibliography}{1}

\bibitem{Bjorklund-2}
Eric Bjorklund.
\newblock A metric for measuring the evenness of timing system rep-rate
  patterns.
\newblock {\em SNS ASD Tech Note, SNS-NOTECNTRL-100}, 2003.

\bibitem{Bjorklund-1}
Eric Bjorklund.
\newblock The theory of rep-rate pattern generation in the sns timing system.
\newblock {\em SNS ASD Tech Note, SNS-NOTE-CNTRL-99}, 2003.

\bibitem{Toussaint-2}
Francisco Gomez-Martin, Perouz Taslakian, and Godfried Toussaint.
\newblock Interlocking and euclidean rhythms.
\newblock 2008.

\bibitem{Toussaint-3}
Francisco G{\'o}mez-Mart{\'\i}n, Perouz Taslakian, and Godfried Toussaint.
\newblock Structural properties of euclidean rhythms.
\newblock {\em Journal of Mathematics and Music}, 3(1):1--14, 2009.

\bibitem{Knuth}
Ronald~L Graham, Donald~E Knuth, Oren Patashnik, and Stanley Liu.
\newblock Concrete mathematics: a foundation for computer science.
\newblock {\em Computers in Physics}, 3(5):106--107, 1989.

\bibitem{Bresenham}
Mitchell~A Harris and Edward~M Reingold.
\newblock Line drawing, leap years, and euclid.
\newblock {\em ACM Computing Surveys (CSUR)}, 36(1):68--80, 2004.

\bibitem{Toussaint-1}
Godfried Toussaint.
\newblock The euclidean algorithm generates traditional musical rhythms.
\newblock In {\em Renaissance Banff: Mathematics, Music, Art, Culture}, pages
  47--56, 2005.

\end{thebibliography}

\end{document}